\numberwithin{equation}{section}
\numberwithin{equation}{section}
\numberwithin{equation}{section}
\newtheorem{thm}{Theorem}[section]
\newtheorem{cor}[thm]{Corollary}
\newtheorem{lem}[thm]{Lemma}
\newtheorem{prop}[thm]{Proposition}
\newtheorem{defn}[thm]{Definition}
\newcommand{\Hom}{\operatorname{Hom}\,}
\newcommand{\Ext}{\operatorname{Ext}\,}
\newcommand{\Ass}{\operatorname{Ass}\,}
\newcommand{\Assh}{\operatorname{Assh}\,}
\newcommand{\Att}{\operatorname{Att}\,}
\newcommand{\Supp}{\operatorname{Supp}\,}
\newcommand{\grad}{\operatorname{grade}\,}
\renewcommand{\dim}{\operatorname{dim}\,}
\newcommand{\cd}{\operatorname{cd}\,}
\newcommand{\Min}{\operatorname{Min}\,}
\newcommand{\h}{\operatorname{ht}\,}
\newcommand{\N}{\mathbb{N}}
\newcommand{\fa}{\mathfrak{a}}
\newcommand{\fb}{\mathfrak{b}}
\newcommand{\fm}{\mathfrak{m}}
\newcommand{\fp}{\mathfrak{p}}
\newcommand{\fq}{\mathfrak{q}}
\newcommand{\fx}{\mathfrak{x}}
\begin{document}
\bibliographystyle{amsplain}


\title[Attached and assoiciated primes Of Local Cohomology modules Via linkage  ]
 {Attached and associated primes Of Local Cohomology modules Via linkage}

\bibliographystyle{amsplain}

     \author[Maryam jahangiri]{Maryam jahangiri$^1$}
     \author[khadijeh sayyari]{khadije sayyari$^2$}

\address{$^{1, 2}$ Faculty of Mathematical Sciences and Computer,
Kharazmi University, Tehran, Iran.}

\email{ jahangiri@khu.ac.ir, std-sayyari@khu.ac.ir}

\keywords{Linkage of ideals,  local cohomology,  Cohen-Macaulay modules}

 \subjclass[2010]{13D45, 13C45, 13C14.}



\begin{abstract}
Let $R$ be a commutative Noetherian ring and $M$ be a finitely generated $R$-module. Considering the new concept of linkage of ideals over a module, we study associated prime ideals, cofiniteness and Artinianness of local cohomology modules of $M$ with respect to some linked ideals over it.

\end{abstract}

\maketitle

\bibliographystyle{amsplain}
\section{introduction}

Let $R$ be a commutative Noetherian ring with $1\neq0$, $\fa$ be an ideal of $R$ and $M$ be an
 $R$-module. For $i\in \mathbb{Z}$, the $i$-th local cohomology functor with respect to $\fa$ is
  defined to be the $i$-th right derive functor of the $\fa$-torsion functor $\Gamma_{\fa}(-),$ where
  $\Gamma_{\fa}(M)= \cup_{n\in \mathbb{N}_0} 0:_M\fa^n. $ Local cohomology was defined by Grothendieck \cite{G},
   actually, it is an "algebraic child of geometric  parents". For more details of local cohomology modules, we
   refer the reader to \cite{BS}.

 There are lots of problems in the study of local cohomology modules (see  \cite{H}) and finiteness problems
 in this subject attracts lots of interests. One of the main problems in this topic is finiteness of the set
 of associated prime ideals, i.e. $\Ass H^i_{\fa} (M)$. Although, in \cite{SI}, Singh showed that $\Ass H^i_{\fa} (M)$
 might be infinite, but there are some cases where it is a finite set, see for example \cite{HS},
  \cite{He} and \cite{LY}. One more problem, is Artinianness of $H^i_{\fa} (M)$ and it has been studied
   by many authors too, see for example \cite{DY} and \cite{He2}.

   Another important topic in commutative algebra and algebraic geometry is the theory of linkage. The significant work of
   Peskine and Szpiro \cite{PS} stated this theory in the modern algebraic language; two proper ideals $\fa$ and $\fb$ in $R$ are
    said to be linked if there is an regular sequence $\underline{\fx}$ in their intersection such that
     $\fa = (\underline{\fx}) :_R \fb$ and $\fb = (\underline{\fx}) :_R \fa$.

In a recent paper \cite{JS}, inspired by the works in the ideal case, the authors present the concept of the linkage
 of ideals over a module. Let   $M$ be a finitely generated $R$-module.
Let $\fa$, $\fb$ and $I$ be ideals of $R$ with  $I\subseteq \fa \cap \fb$  such that $I$ is generated by an $M$-regular sequence and
 $ \fa M \neq M\neq \fb M$. Then, $\fa$ and $\fb$ are said to be linked by $I$ over $M,$ denoted by $\fa\sim_{(I;M)}\fb,$ if
 $\fb M = IM:_M\fa$ and $\fa M = IM:_M\fb $. This is a generalization of the classical concept of linkage when $M= R$.

In this paper, we consider the above generalization and study Artinianness and associated prime ideals of local cohomology modules
 $H^i_{\fa} (M)$ where $\fa$ is a linked ideal over   $M$.

 More precisely, in Section 2, we show that if $R$ is
  Cohen-Macaulay and $t\in \mathbb{N}$ then, for any ideal $\fa$ of $R,$ $\Ass H^i_{\fa} (R)$ is finite if and only if
   $\Ass H^i_{\fa} (R)$ is finite for any linked ideal $\fa$ of $R$ (Theorem \ref{t2}).
Then, we study the finiteness of some  $\Ext$ modules and, as a
corollary, we show that if $\fa\sim_{(I;R)}\fb$ then
 $H^i_{\fa} (R)$ is $\fa$-"cofinite" and $\fb$-"cofinite" if and only if it is $I$-"cofinite" (Corollary \ref{c2}).

In Section 3, we study Artinianness and attached prime ideals of
local cohomology modules $H^i_{\fa} (M)$ where $\fa$ is a linked
ideal over  $M$ and, among other things, we present some necessary
and sufficient conditions for the finitely generated $R$-module $M$
to be Cohen-Macaulay in terms of the existence of some special
linked ideals over it (Theorem \ref{t6}).

Throughout the paper, $R$ denotes a non-trivial commutative Noetherian ring, $\fa$ and $\fb$ are non-zero proper ideals
 of $R$ and $M$ will denote a finitely generated $R$-module.

 \section{Associated prime ideals and cofiniteness}

In this section, we study finiteness of the set of Associated prime ideals of local cohomology modules and the "cofinite"
 property of these modules over some linked ideals.

We begin by the definition of one of our main tool.
\begin{defn}\label{F1}
 \emph{Assume that $\fa M\neq M\neq\fb M$ and let $I\subseteq \fa \cap \fb$ be an ideal generating by an $M$-regular
 sequence. Then we say that the ideals $\fa$ and $\fb$ are linked by $I$ over $M$, denoted by $\fa\sim_{(I;M)}\fb$, if
  $\fb M = IM:_M\fa$ and $\fa M = IM:_M\fb $. Also, the ideals $\fa$ and $\fb$ are said to be geometrically linked by $I$
   over $M$ if $\fa M \cap \fb M = IM$. The ideal $\fa$ is $M$-selflinked by $I$ if $\fa\sim_{(I;M)}\fa$. Note that in the
   case where $M = R$, this concept is the classical concept of linkage of ideals in \cite{PS}.}
\end{defn}

\begin{prop}\label{l1}
 Let $I$ be an ideal of $R$ such that $\fa\sim_{(I;M)}\fb$. Assume that $\Ass \frac{M}{IM}=\Min \Ass \frac{M}{IM}$.
 Then the following statements hold.
\begin{itemize}

 \item [(i)] $\Supp H^i_{\fa} (M) \cap \Ass \frac{M}{\fa M}= \emptyset$, for all $i>\grad_M \fa$.
 \item  [(ii)]  If $I=0$ and $M$ is projective then $\Ass \Ext^1_R(\frac{M}{\fa M},\frac{R}{\fa})=
  \Supp \frac{M}{\fb M} \cap \Ass \frac{R}{\fa}$. In particular, in the case where $M= R$, $$\Ass \Ext^1_R(\frac{R}{\fa},\frac{R}{\fa})=
   \Ass \frac{R}{\fb} \cap \Ass \frac{R}{\fa}= \Ass \Ext^1_R(\frac{R}{\fb},\frac{R}{\fb}).$$
     \end{itemize}
  \end{prop}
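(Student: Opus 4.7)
The plan is, for part (i), first to establish the containment $\Ass(M/\fa M) \subseteq \Ass(M/IM)$ and then to use it for a local vanishing argument. Given $\fp \in \Ass(M/\fa M)$, I would write $\fp = \Ann_R(m + \fa M)$ with $m \in M \setminus \fa M$. The linkage identity $\fa M = IM :_M \fb$ gives $\fb \cdot \fa M \subseteq IM$, so $\fp \cdot \fb m \subseteq \fb \cdot \fa M \subseteq IM$, meaning $\fp$ annihilates the submodule $(\fb m + IM)/IM$ of $M/IM$. This submodule is nonzero, since $\fb m \subseteq IM$ would put $m$ into $IM :_M \fb = \fa M$. Picking any $\fq \in \Ass((\fb m + IM)/IM) \subseteq \Ass(M/IM)$ gives $\fp \subseteq \fq$. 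On the other hand $\fp \in \Supp(M/\fa M) \subseteq \Supp(M/IM)$, so some $\fq' \in \Min \Supp(M/IM) = \Ass(M/IM)$ lies below $\fp$. The chain $\fq' \subseteq \fp \subseteq \fq$ sits inside $\Ass(M/IM) = \Min \Ass(M/IM)$ and therefore collapses to $\fp = \fq = \fq' \in \Ass(M/IM)$.

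With this in hand the local vanishing is routine. For such $\fp$, minimality in $\Supp(M/IM)$ forces $\dim(M/IM)_\fp = 0$, and since $I_\fp$ is generated by an $M_\fp$-regular sequence of length $n$ (the length of the generating sequence for $I$), the standard equality $\dim M_\fp = \dim(M_\fp/I_\fp M_\fp) + n$ yields $\dim M_\fp = n$. Combined with $\grad_M \fa \geq n$ (because $I \subseteq \fa$) and Grothendieck vanishing $\cd(\fa_\fp, M_\fp) \leq \dim M_\fp = n$, this gives $H^i_\fa(M)_\fp \cong H^i_{\fa_\fp}(M_\fp) = 0$ for every $i > \grad_M \fa$, so $\fp \notin \Supp H^i_\fa(M)$.

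For part (ii), I would apply $\Hom_R(-, R/\fa)$ to $0 \to \fa M \to M \to M/\fa M \to 0$. Since $M$ is projective, $\Ext^1_R(M, R/\fa) = 0$, and the restriction map $\Hom_R(M, R/\fa) \to \Hom_R(\fa M, R/\fa)$ is zero (any $\phi$ sends $am \in \fa M$ to $a\phi(m) \in \fa \cdot R/\fa = 0$), so the long exact sequence produces $\Ext^1_R(M/\fa M, R/\fa) \cong \Hom_R(\fa M, R/\fa)$. The standard identity $\Ass \Hom_R(N, K) = \Supp N \cap \Ass K$ (for finitely generated $N$) reduces the claim to $\Supp(\fa M) = \Supp(M/\fb M)$. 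I would verify this locally using both sides of the linkage $\fa M = 0 :_M \fb$ and $\fb M = 0 :_M \fa$: if $\fa_\fp M_\fp = 0$ then $M_\fp \subseteq 0 :_{M_\fp} \fa_\fp = \fb_\fp M_\fp$, forcing $(M/\fb M)_\fp = 0$; and conversely $M_\fp = \fb_\fp M_\fp = 0 :_{M_\fp} \fa_\fp$ forces $\fa_\fp M_\fp = 0$.

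The ``in particular'' statement reduces, on taking $M = R$, to verifying $V(\fb) \cap \Ass(R/\fa) = \Ass(R/\fb) \cap \Ass(R/\fa)$. For the nontrivial inclusion take $\fp = \Ann_R(r + \fa) \in V(\fb) \cap \Ass(R/\fa)$; the relation $\fa\fb = 0$ (from $\fa = 0 :_R \fb$) gives $\fp \cdot r\fb = 0$ with $r\fb \neq 0$ (else $r \in \fa$), so $\fp$ lies inside some $\fq \in \Ass(r\fb) \subseteq \Ass R = \Min \Ass R$; being a prime contained in a minimal prime, $\fp$ is itself minimal, and since $\fp \supseteq \fb$ this places $\fp \in \Min V(\fb) \subseteq \Ass(R/\fb)$. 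The second equality follows from the symmetry of the linkage $\fa \sim_{(0;R)} \fb$. I expect the main obstacle to be the squeeze argument in part (i) establishing $\Ass(M/\fa M) \subseteq \Ass(M/IM)$, since it requires the careful simultaneous use of the linkage identity and the no-embedded-primes hypothesis.
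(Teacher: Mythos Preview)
Your proposal is correct and follows essentially the same route as the paper: for (i) you localize at $\fp\in\Ass(M/\fa M)$ and use Grothendieck vanishing together with $\dim M_\fp=\grad_M\fa$, and for (ii) you apply $\Hom_R(-,R/\fa)$ to $0\to\fa M\to M\to M/\fa M\to 0$ to identify $\Ext^1_R(M/\fa M,R/\fa)\cong\Hom_R(\fa M,R/\fa)$ and then compute its associated primes. The only difference is that the paper outsources the key facts---that $\h_M\fp=\grad_M\fa$ for $\fp\in\Ass(M/\fa M)$, that $\Supp(\fa M)=\Supp(M/\fb M)$, and that $\Ass(R/\fa)=\Ass R\cap V(\fa)$---to the companion paper \cite{JS}, whereas you supply direct arguments (your squeeze argument for the inclusion $\Ass(M/\fa M)\subseteq\Ass(M/IM)$ is precisely what underlies the cited result); your version is therefore self-contained but otherwise matches the paper's proof.
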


\begin{proof}
\begin{itemize}
 \item [(i)] Let $\fp\in\Supp H^i_{\fa} (M) \cap \Ass \frac{M}{\fa M}$ for some $i>\grad_M \fa$. Hence,
  by \cite[3.11(i)]{JS}, $\h_M\fp= \grad_M \fa$ and $H^i_{\fa R_{\fp}} (M_{\fp}) =0,$ which is a contradiction.
 \item  [(ii)]  Applying $\Hom_R(-,\frac{R}{\fa})$ on the sequence $ 0\rightarrow \fa M \rightarrow M\rightarrow
  \frac{M}{\fa M}\rightarrow 0,$ we get the exact sequence $$ 0\rightarrow \Hom_R(\frac{M}{\fa M},\frac{R}{\fa})
   \rightarrow \Hom_R(M,\frac{R}{\fa})\rightarrow \Hom_R(\fa M,\frac{R}{\fa})\rightarrow \Ext^1_R(\frac{M}{\fa M},
   \frac{R}{\fa})\rightarrow 0.$$ This, in conjunction with the isomorphisms $$\Hom_R(\frac{M}{\fa M},\frac{R}{\fa})
    \cong \Hom_R(M,\Hom_R(\frac{R}{\fa},\frac{R}{\fa}))\cong \Hom_R(M,\frac{R}{\fa}),$$ implies that $$\Hom_R(\fa M,\frac{R}{\fa})
    \cong \Ext^1_R(\frac{M}{\fa M},\frac{R}{\fa}).$$ Now, in view of the assumption and \cite[2.5(ii)]{JS},
     $$\Ass \Ext^1_R(\frac{M}{\fa M},\frac{R}{\fa})= \Supp \frac{M}{\fb M} \cap \Ass \frac{R}{\fa}.$$

     Also, by \cite[2.8(iii)]{JS}, $\Ass \frac{R}{\fa}= \Ass R \cap V(\fa).$ This proves the last claim.
     \end{itemize}

\end{proof}
The following theorem provides an equivalent condition for the finiteness of $\Ass H^t_{\fa}(R).$
\begin{thm} \label{t2}
Let $R$ be a Cohen-Macaulay local ring and let $t\in \mathbb{N}$. Then, the following statements are equivalent.
\begin{itemize}
 \item [(i)]  For any ideal $\fa$ of $R$, $\Ass H^t_{\fa}(R)$ is a finite set.
 \item [(ii)] For any linked ideal $\fa$, $\Ass H^t_{\fa}(R)$ is a finite set.
 \end{itemize}
\end{thm}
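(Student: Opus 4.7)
The implication $(i)\Rightarrow(ii)$ is immediate, since every linked ideal is in particular an ideal. The plan for the converse is to reduce an arbitrary ideal $\fa$ to a finite collection of linked ideals whose $t$-th local cohomology controls the associated primes of $H^t_\fa(R)$.

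Since $H^t_\fa(R)\cong H^t_{\sqrt{\fa}}(R)$, one may first replace $\fa$ by its radical and assume that $\fa$ is a finite intersection of its minimal primes. The next step is to stratify by height: write $\sqrt{\fa}=J_{g_1}\cap\cdots\cap J_{g_k}$, where $J_{g_i}$ is the intersection of those minimal primes of $\fa$ of height $g_i$. Each $J_{g_i}$ is equidimensional, hence an unmixed ideal of grade $g_i$; invoking the linkage framework of \cite{JS} in the Cohen-Macaulay setting, such an ideal is linked via any $R$-regular sequence $\underline{x}$ of length $g_i$ contained in it, giving $J_{g_i}\sim_{((\underline{x});R)}((\underline{x}):_R J_{g_i})$ in the sense of Definition \ref{F1}. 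By (ii), $\Ass H^t_{J_{g_i}}(R)$ is then finite for every $i$.

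To assemble these finiteness statements into one for $\fa$, the natural tool is the Mayer--Vietoris long exact sequence
$$\cdots\to H^{t-1}_{\fb+\fc}(R)\to H^t_{\fb\cap\fc}(R)\to H^t_\fb(R)\oplus H^t_\fc(R)\to H^t_{\fb+\fc}(R)\to\cdots,$$
applied to the decomposition $\sqrt{\fa}=\fb\cap\fc$ with $\fb=J_{g_1}$ and $\fc=J_{g_2}\cap\cdots\cap J_{g_k}$, followed by induction on $k$. Each inductive step yields $\Ass H^t_{\fb\cap\fc}(R)\subseteq\Ass H^{t-1}_{\fb+\fc}(R)\cup\Ass H^t_\fb(R)\cup\Ass H^t_\fc(R)$, in which the last two terms are already finite by the previous paragraph and the inductive hypothesis.

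The hard part will be controlling the error term $\Ass H^{t-1}_{\fb+\fc}(R)$: the sum $\fb+\fc$ is a genuinely new ideal, typically not linked, and the cohomology degree has dropped to $t-1$, which lies outside the direct scope of the hypothesis on the fixed degree $t$. To close this gap I would run a secondary induction on $t$, anchored by the trivial case $t=0$, where $H^0_\fa(R)\subseteq R$ is a submodule of a Noetherian module and hence has only finitely many associated primes. Proposition \ref{l1}(i), which forbids minimal primes of $\fa$ from $\Supp H^i_\fa(R)$ above the grade when $\fa$ is linked, is the natural tool for restricting which primes can enter these correction terms and keeping the induction finite.
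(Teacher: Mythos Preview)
Your approach has two genuine gaps. First, the Mayer--Vietoris sequence is written in the wrong direction: the standard form (cf.\ \cite[3.2.3]{BS}) reads
$\cdots\to H^{t}_{\fb+\fc}(R)\to H^{t}_{\fb}(R)\oplus H^{t}_{\fc}(R)\to H^{t}_{\fb\cap\fc}(R)\to H^{t+1}_{\fb+\fc}(R)\to\cdots$,
so the direct sum maps \emph{into} $H^{t}_{\fb\cap\fc}(R)$, and the connecting term sits in degree $t+1$, not $t-1$. More seriously, even with the correct sequence your containment $\Ass H^{t}_{\fb\cap\fc}(R)\subseteq\Ass(\text{left term})\cup\Ass(\text{right terms})$ does not follow: the kernel of $H^{t}_{\fb\cap\fc}(R)\to H^{t+1}_{\fb+\fc}(R)$ is only a \emph{quotient} of $H^{t}_{\fb}(R)\oplus H^{t}_{\fc}(R)$, and a quotient of a module with finitely many associated primes may well have infinitely many. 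Second, your proposed secondary induction on $t$ cannot be grounded in hypothesis (ii), which is assumed only for the single fixed $t$ of the theorem; knowing finiteness in degree $t$ for linked ideals says nothing about degree $t-1$. Proposition~\ref{l1}(i) restricts the support of $H^i_\fa$ but does not rescue either of these issues.

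The paper circumvents both problems by first invoking \cite[Corollary~1]{He} to reduce to the case $\h\fa=t-1$, then splitting $\sqrt{\fa}=\fa'\cap\fb$ into just two pieces: $\fa'$ is the intersection of the minimal primes of $\fa$ of height exactly $t-1$ (a linked radical ideal by \cite[2.11]{JS1}), and $\fb$ is the intersection of the remaining minimal primes, so $\h\fb\geq t$. A short incomparability argument shows $\h(\fa'+\fb)\geq t+1$. Since $R$ is Cohen--Macaulay this forces $H^{t}_{\fa'+\fb}(R)=0$, so in Mayer--Vietoris the map $H^{t}_{\fa'}(R)\oplus H^{t}_{\fb}(R)\hookrightarrow H^{t}_{\fa}(R)$ is \emph{injective}, giving the honest inclusion $\Ass H^{t}_{\fa}(R)\subseteq\Ass H^{t}_{\fa'}(R)\cup\Ass H^{t}_{\fb}(R)\cup\Ass H^{t+1}_{\fa'+\fb}(R)$. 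All three right-hand sets are finite: the first by (ii), and the last two because $H^{t}_{\fb}(R)$ and $H^{t+1}_{\fa'+\fb}(R)$ sit at the grade and hence have finite associated primes by \cite[Theorem~1]{He}. No induction on $t$ is needed.
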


\begin{proof}
Let $\fa \unlhd R$ be an ideal and assume that, for all linked ideals $\fb$, $\Ass H^t_{\fb}(R)$ is a finite set.
 By \cite[Corollary 1]{He}, we may consider $\h \fa = t-1 $. Using \cite[2.11 ]{JS1}, there exists a linked radical
  ideal $\fa'\supseteq \fa$ such that $\grad_R \fa' = \grad_R \fa = t-1$. In view of the structure of $\fa'$
  (in the proof of \cite[ 2.11(i)]{JS1}, and the Cohen-Macaulayness of $R$, we have $$\Ass \frac{R}{\fa'} =
   \{\fp| \fp \in \Min \Ass \frac{R}{\fa}, \h \fp = \h \fa \}.$$ Set $\fb : = \cap_{\fp \in \Min \Ass \frac{R}{\fa}
    - \Ass \frac{R}{\fa'}} \fp$. Then $\h \fb > t-1$ and $\sqrt{\fa} = \fa' \cap \fb$. We claim that $\h \fa'+\fb >t$.
    For that, if $\h \fa'+\fb =t$ then there exists $\fq \in \Min \Ass \frac{R}{\fa'+\fb}$ with $\h \fq = t$. Hence
     $\fq \in \Min \Ass \frac{R}{\fb} \cap V(\fa')$ and there exists $\fp \in \Ass \frac{R}{\fa'}$ such that $\fp\subseteq \fq$,
     which is a contradiction.

Now, the Mayer-Vietorise sequence $$0 \longrightarrow H^t_ {\fa'} (R) \oplus H^t_ {\fb} (R) \longrightarrow H^t_ {\fa}(R)\longrightarrow H^{t+1}_ {\fa'+\fb} (R),$$ in conjunction with \cite[Theorem 1]{He}, proves the claim.

\end{proof}

\begin{prop} \label {p1}
Let $R$ be a UFD and $\fa$ be a linked ideal. Then, $\Ass
H^2_{\fa}(R)$ is finite. If, in addition, $\dim R <4$ then $\Ass
H^i_{\fa}(R)$ is a finite set for all $i\in \N_0.$
\end{prop}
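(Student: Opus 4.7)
The plan is to mirror the strategy used in the proof of Theorem \ref{t2} and then exploit the special structure of a UFD. First I would invoke \cite[Corollary 1]{He} to dispose of every case except $\h \fa = t-1$; for $t=2$ this reduces the first claim to showing that $\Ass H^2_\fa(R)$ is finite when $\h \fa = 1$.

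In that remaining case factoriality takes over. Every minimal prime over $\fa$ has height one, hence is principal, say $\fp_j = (p_j)$ for pairwise non-associate irreducibles $p_1,\dots,p_k$. Pairwise coprimality then gives
\[
\sqrt{\fa} \;=\; \bigcap_{j=1}^{k} (p_j) \;=\; (p_1 p_2 \cdots p_k),
\]
a principal ideal. Since local cohomology depends only on the radical and a principal ideal has vanishing local cohomology in cohomological degrees $\ge 2$, one obtains $H^2_\fa(R) = 0$, so $\Ass H^2_\fa(R) = \emptyset$, which settles the first assertion.

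For the addendum with $\dim R < 4$, note that $H^i_\fa(R) = 0$ for $i > 3$, so only the cases $i \in \{0,1,2,3\}$ are at stake. The case $i=0$ is immediate since $\Gamma_\fa(R)$ is finitely generated, $i=1$ is a well-known general fact on Noetherian rings, and $i=2$ is the first assertion. For $i=3$, I would apply the Lichtenbaum--Hartshorne theorem at each maximal ideal $\fm$ of height three: because $R$ is a UFD, every $R_\fm$ is a domain, so $H^3_{\fa R_\fm}(R_\fm) = 0$ whenever $\fa R_\fm$ fails to be $\fm R_\fm$-primary. Consequently, if $\h \fa < 3$ then $H^3_\fa(R) = 0$, while if $\h \fa = 3$ then every prime of $V(\fa)$ has height three and is therefore minimal over $\fa$, so $V(\fa)$ is finite and $\Ass H^3_\fa(R) \subseteq V(\fa)$ is finite.

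The main obstacle is the $i=3$ step in a possibly non-local UFD: one must pass from the Lichtenbaum--Hartshorne vanishing at each height-three stalk to either global vanishing of $H^3_\fa(R)$ or to finiteness of $V(\fa)$. The linkage hypothesis on $\fa$, while retained so as to fit the theme of the paper, does not seem to enter essentially in either half of the argument — the UFD hypothesis is doing the real work, turning $\sqrt{\fa}$ into a principal ideal in the first part and keeping every localization a domain for the Lichtenbaum--Hartshorne application in the second.
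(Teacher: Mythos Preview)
Your reduction to the case $\h\fa=1$ is fine, but the next sentence --- ``Every minimal prime over $\fa$ has height one'' --- is where the argument breaks. The condition $\h\fa=1$ only says that \emph{some} minimal prime of $\fa$ has height one; nothing prevents others from having larger height. For instance, in $k[x,y,z]$ the ideal $(xy,xz)$ has height one but the minimal prime $(y,z)$ has height two, so $\sqrt{(xy,xz)}=(x)\cap(y,z)$ is not principal. Your later remark that ``the linkage hypothesis \dots\ does not seem to enter essentially'' is therefore mistaken: linkage is exactly what rules such ideals out. The paper splits on $\operatorname{grade}\fa$ rather than $\h\fa$, and in the case $\operatorname{grade}\fa=1$ it uses the linkage $\fa\sim_{(I;R)}\fb$ to write $\sqrt{\fa}=\fp_1\cap\dots\cap\fp_l$ with every $\fp_j\in\Ass R/I$. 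Since $I$ is generated by a regular sequence of length one, each $\fp_j$ has $\operatorname{grade}\fp_j=1$; a short factoriality argument then forces $\h\fp_j=1$, and only now does one conclude that $\sqrt{\fa}$ is principal and $H^2_\fa(R)=0$.

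Your treatment of the addendum is also more delicate than necessary and, as written, not quite correct: the implication ``$\h\fa<3\Rightarrow H^3_\fa(R)=0$'' fails for the same reason as above (a height-three maximal ideal can still be a minimal prime of $\fa$, and Lichtenbaum--Hartshorne then gives nonvanishing at that stalk). The paper bypasses all of this with a single observation: $H^{\dim R}_\fa(R)$ is Artinian, so its set of associated primes is automatically finite. Together with the first assertion and the trivial cases $i=0,1$, this disposes of every $i$ when $\dim R<4$.
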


\begin{proof}
     In the case $\grad \fa \geq 2$, the result follows from \cite[Theorem 1]{He}. Let $\fa$ be a linked ideal by $I$ with $\grad \fa =1$. Via \cite[2.3(iv) and 2.6(i)]{JS2}, $\sqrt{\fa} = \fp_1\cap...\cap\fp_l$ for some $\fp_,...,\fp_l \in \Ass\frac{R}{I}.$

     We claim that $\h \fp_i = 1$, for all $i= 1,...,l$. Let $i\in \{ 1,...,l\}$. There exists an irreducible element $x \in \fp_i - Z(R)$. If $\h \fp_i > 1$ then there exists an irreducible element $y \in \fp_i - (x)$. As $\grad \fp_i =1$, $y \in Z(\frac{R}{(x)})$ and there exist $r \in R-(x)$ and $r'\in R$ such that $ry = r'x$. Therefore, $x|r$ which is a contradiction.

     Therefore, by \cite[Exercise 20.3]{M}, $\sqrt{\fa}$ is principal and $H^2_{\fa}(R)=0$.

The last assertion follows from the fact that $H^{\dim R}_{\fa}(R)$ is Artinian.

\end{proof}
The $R$-module $X$ is said to be $\fa$-cofinite if $\Supp X
\subseteq V(\fa)$ and $\Ext^i_R(\frac{R}{\fa}, X)$ is a finitely
generated $R$-module for all $i\in \N_0.$
The cofinite property of
local cohomology modules is one of the main problems in this
subject, see for example \cite{MD}. In the last item of this
section, we consider this problem for linked ideals.

We recall the following lemma from \cite[Proposition 1]{MD} which will be used in the next theorem.
\begin{lem} \label{l3}
Let $N$ be an $R$-module and $p\geq0$. Suppose that
$\Ext^i_R(M, N)$ is a finitely generated $R$-module for all $i\leq p$.
Then, for any finitely generated $R$-module $L$ with $\Supp L \subseteq \Supp M$, $\Ext^i_R(L, N)$ is
 finitely generated for all $i\leq p.$
\end{lem}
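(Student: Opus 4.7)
\medskip
\noindent\textbf{Proof plan.} The plan is to reduce the statement to the case where $L$ is a quotient of a finite direct sum of copies of $M$, and then to run a double induction. The key external input is Gruson's filtration theorem: since $R$ is Noetherian and $L,M$ are finitely generated with $\Supp L\subseteq \Supp M$, there exists a finite chain of submodules
\[
0=L_0\subset L_1\subset\cdots\subset L_n=L
\]
such that each factor $L_j/L_{j-1}$ is a homomorphic image of a module of the form $M^{k_j}$ with $k_j\in\N$. I would state this once and then exploit it in two ways.

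First I would perform an induction on the filtration length $n$. From the short exact sequence $0\to L_{n-1}\to L\to L/L_{n-1}\to 0$ and its associated long exact $\Ext$-sequence, the finite generation of $\Ext^i_R(L,N)$ for $i\leq p$ is reduced to the corresponding statement for $L_{n-1}$ and for the top quotient $L/L_{n-1}$, the latter being a quotient of some $M^k$. Thus it suffices to treat a single finitely generated $L'$ sitting in a short exact sequence
\[
0\lo K\lo M^k\lo L'\lo 0.
\]
Note that, by Noetherianness, $K$ is finitely generated, and clearly $\Supp K\subseteq \Supp M^k=\Supp M$, so $K$ satisfies the hypotheses imposed on $L$.

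The core of the argument is an induction on $p$. For the base case $p=0$, left-exactness of $\Hom_R(-,N)$ embeds $\Hom_R(L',N)$ into $\Hom_R(M^k,N)\cong \Hom_R(M,N)^k$, which is finitely generated by hypothesis; combining this with the filtration induction yields finite generation of $\Hom_R(L,N)$. For the inductive step, consider the piece
\[
\Ext^{i-1}_R(K,N)\lo \Ext^i_R(L',N)\lo \Ext^i_R(M^k,N)
\]
of the long exact sequence for $0\to K\to M^k\to L'\to 0$. The right term is finitely generated for $i\leq p$ by hypothesis, and the left term is finitely generated for $i-1\leq p-1$, i.e.\ for $i\leq p$, by the outer induction hypothesis applied to the finitely generated module $K$ with $\Supp K\subseteq\Supp M$. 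Hence $\Ext^i_R(L',N)$ is finitely generated for all $i\leq p$, and the filtration induction finishes the proof.

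The most delicate point is the careful interlocking of the two inductions: one must apply the outer induction (on $p$) to the auxiliary module $K$, not to $L$ itself, and confirm that $K$ meets the standing hypotheses. Once that bookkeeping is set up, the rest is a direct diagram chase using Noetherianness to preserve finite generation under passage to submodules and quotients.
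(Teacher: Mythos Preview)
Your argument is correct and is exactly the standard proof of this result. Note, however, that the paper does not give its own proof of this lemma: it is stated with the preface ``We recall the following lemma from \cite[Proposition 1]{MD}'' and no proof is supplied. The proof in Delfino--Marley proceeds precisely along the lines you describe: Gruson's filtration theorem reduces to the case of a quotient of $M^k$, and then a double induction (on $p$ and on the filtration length) combined with the long exact $\Ext$-sequence finishes the job. So your proposal matches the proof in the cited reference rather than anything original to this paper.
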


The following theorem considers some equivalent conditions for the
finiteness of some Ext modules.
\begin{thm} \label{t3}
Let $I$ be a non-prime ideal of $R$ which is generated by an $R$-sequence and $N$ be an $R$-module. Then, the following statements are equivalent.
\begin{itemize}
 \item [(i)]  $\Ext^i_R(\frac{R}{I}, N)$ is finitely generated, for all $i\in \N_0$.
 \item [(ii)] $\Ext^i_R(\frac{R}{\fp}, N)$ is finitely generated, for any ideal $\fp \in \Ass \frac{R}{I}$ and all $i\in \N_0$.
 \item [(iii)] $\Ext^i_R(\frac{R}{\fa}, N)$ is finitely generated, for any linked ideal $\fa$ by $I$ and all $i\in \N_0$.
 \item [(iv)] $\Ext^i_R(\frac{R}{\fa}, N)$ and $\Ext^i_R(\frac{R}{\fb}, N)$ are finitely generated, for some ideals
  $\fa$ and $\fb$ such that $\fa\sim_{(I;R)}\fb$ and all $i\in \N_0$.
\end{itemize}
\end{thm}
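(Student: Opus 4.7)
The plan is to prove the four-way equivalence via the cycle (i) $\Rightarrow$ (ii) $\Rightarrow$ (i) and (i) $\Rightarrow$ (iii) $\Rightarrow$ (iv) $\Rightarrow$ (i), using Lemma \ref{l3} as the workhorse for every ``support-containment'' step and reserving a Mayer-Vietoris short exact sequence for the hardest implication.

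For (i) $\Rightarrow$ (ii) and (i) $\Rightarrow$ (iii), observe that in either case the relevant ideal $J$ (namely $\fp \in \Ass R/I$ in (ii), or a linked ideal $\fa$ by $I$ in (iii)) contains $I$, so $\Supp R/J \subseteq V(I) = \Supp R/I$; Lemma \ref{l3} applied with $L = R/J$ and $M = R/I$ then transfers finiteness of every $\Ext^i_R(R/I, N)$ to $\Ext^i_R(R/J, N)$. Conversely, for (ii) $\Rightarrow$ (i), I would set $M := \bigoplus_{\fp \in \Ass R/I} R/\fp$ (a finite direct sum, since $\Ass R/I$ is finite); then $\Ext^i_R(M, N) \cong \bigoplus_\fp \Ext^i_R(R/\fp, N)$ is finitely generated by (ii), and, since every minimal prime of $I$ is associated, $\Supp R/I = \bigcup_{\fp \in \Min R/I} V(\fp) \subseteq \Supp M$, so another use of Lemma \ref{l3} with $L = R/I$ gives (i). The implication (iii) $\Rightarrow$ (iv) is immediate.

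The heart of the argument is (iv) $\Rightarrow$ (i). From $\fa \sim_{(I;R)} \fb$ one has $\fa = I :_R \fb$, whence $\fa\fb \subseteq I \subseteq \fa \cap \fb$; combined with $(\fa \cap \fb)^2 \subseteq \fa\fb$ this forces $\sqrt{\fa \cap \fb} = \sqrt{I}$ and therefore
$$\Supp R/(\fa \cap \fb) \; = \; \Supp R/I.$$
Next I would feed the Mayer-Vietoris short exact sequence
$$0 \to R/(\fa \cap \fb) \to R/\fa \oplus R/\fb \to R/(\fa + \fb) \to 0$$
into the induced $\Ext$-long exact sequence
$$\cdots \to \Ext^i_R(R/(\fa+\fb), N) \to \Ext^i_R(R/\fa, N) \oplus \Ext^i_R(R/\fb, N) \to \Ext^i_R(R/(\fa \cap \fb), N) \to \Ext^{i+1}_R(R/(\fa+\fb), N) \to \cdots.$$
Since $\Supp R/(\fa+\fb) \subseteq \Supp R/\fa$, Lemma \ref{l3} together with (iv) makes every $\Ext^i_R(R/(\fa+\fb), N)$ finitely generated, so each $\Ext^i_R(R/(\fa \cap \fb), N)$ is squeezed between finitely generated modules and is itself finitely generated. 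A final application of Lemma \ref{l3} with $L = R/I$ and $M = R/(\fa \cap \fb)$, justified by the support equality above, delivers (i).

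The main obstacle is precisely this last implication: the finiteness information lives on the pair $(R/\fa, R/\fb)$, while the target is $R/I$, and the support-transfer lemma \ref{l3} cannot close the loop at the end until one observes that the defining colon relation $\fa = I :_R \fb$ forces $\fa\fb \subseteq I$ and hence $V(\fa \cap \fb) = V(I)$. Everything else in the equivalence is a routine support comparison.
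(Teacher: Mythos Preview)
Your argument is correct, but it follows a different path from the paper's in two places.

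For (i)$\Rightarrow$(iii) you simply observe $I\subseteq\fa$, hence $\Supp R/\fa\subseteq\Supp R/I$, and apply Lemma~\ref{l3}. The paper instead routes through (ii)$\Rightarrow$(iii): it replaces $\fa$ by $\sqrt{\fa}$, invokes a result of Martsinkovsky--Strooker to place each minimal prime of $\fa$ inside $\Ass R/I$, and then compares supports via the auxiliary module $\bigoplus R/\fp_j$. Your route is shorter and avoids that citation entirely.

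For (iv)$\Rightarrow$(i) the paper quotes \cite[2.5(iii)]{JS} to get $\Supp R/I=\Supp R/\fa\cup\Supp R/\fb$ and then, for each $\fp_i\in\Ass R/I$, uses whichever of $R/\fa$ or $R/\fb$ dominates its support to make $\Ext^i(R/\fp_i,N)$ finitely generated; one more pass through Lemma~\ref{l3} with $M=\bigoplus R/\fp_i$ finishes. You instead derive $\sqrt{\fa\cap\fb}=\sqrt{I}$ directly from $\fa\fb\subseteq I\subseteq\fa\cap\fb$ and run the Mayer--Vietoris long exact sequence, which is self-contained and bypasses the external linkage reference. The paper's approach is more uniform---every implication is filtered through the ``atoms'' $\bigoplus_{\fp\in\Ass R/I}R/\fp$---while yours trades that structural viewpoint for fewer prerequisites. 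Both close the cycle correctly; note that (iii)$\Rightarrow$(iv), which both proofs leave implicit, uses the non-primeness of $I$ to guarantee that at least one linked pair over $I$ exists.
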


\begin{proof}

 $"(i) \rightarrow (ii)"$ is clear from the fact that $\Supp \frac{R}{\fp} \subseteq \Supp \frac{R}{I}$ and the above lemma.

 $"(ii) \rightarrow (iii)"$ Let $\fa$ be a linked ideal by $I.$ As $\Supp \frac{R}{\sqrt{\fa}} \subseteq \Supp \frac{R}{\fa}$, in view of \ref{l3}, we can assume that $\fa$ is a radical linked ideal by $I$. Let $\Min \Ass \frac{R}{\fa}=\{ \fp_1,..., \fp_n\}$ and $M:= \frac{R}{\fp_1}\oplus ... \oplus\frac{R}{\fp_n}$. By \cite[Proposition 5.p594]{MS}, $\fp_j \in \Ass \frac{R}{I}$. So, $\Ext^i_R(M, N)$ is finitely generated for all $i\geq 0$. Now, the result follows from the fact that $\Supp \frac{R}{\fa}= \Supp M$.

 $"(iii)\rightarrow (i)"$ Let $\Ass \frac{R}{I}=\{ \fp_1,..., \fp_n\}$ and $M:= \frac{R}{\fp_1}\oplus ... \oplus\frac{R}{\fp_n}$. By the assumption and \cite[2.3]{JS2}, $\Ext^i_R(M, N)$ is finitely generated, for all $i\geq 0$. Therefore, the result has desired from the above lemma.

 $"(iv)\rightarrow (i)"$  Assume that $\Ass \frac{R}{I}=\{ \fp_1,..., \fp_n\}$ and $M:= \frac{R}{\fp_1}\oplus ... \oplus\frac{R}{\fp_n}.$ By \cite[2.5(iii)]{JS}, $\Supp \frac{R}{I} = \Supp \frac{R}{\fa} \bigcup \Supp \frac{R}{\fb}.$ Let $1\leq i\leq n$ and assume that $\fp_i \in \Supp \frac{R}{\fa}.$ Then, $\Supp \frac{R}{\fp_i} \subseteq\Supp \frac{R}{\fa}$ and $\Ext^i_R(\frac{R}{\fp_i}, N)$ is finitely generated. Therefore, $\Ext^i_R(M, N)$ is finitely generated for all $i\geq 0$ and the result follows from the fact that $\Supp \frac{R}{I}= \Supp M$.

   \end{proof}
The following corollary presents an equivalent condition for the $\fa$-cofiniteness of $H^i_{\fa}(R)$ in the case where $\fa$ is a linked ideal.
\begin{cor} \label{c2}
Let $i\in \N_0$ and $I$ be a non-prime ideal of $R$ such that $\fa$
is linked by $I$. If $H^i_{\fa}(R)$ is $I$-cofinite then
$H^i_{\fa}(R)$ is $\fa$-cofinite. In particular, in the case where
$i> \grad I$ and $\fa\sim_{(I;R)}\fb$, $H^i_{\fa}(R)$ is
$\fa$-cofinite and $\fb$-cofinite if and only if it is $I$-cofinite.
\end{cor}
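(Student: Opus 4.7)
The plan is to prove the two claims of the corollary separately, with both relying on Theorem \ref{t3} for the $\Ext$-finiteness conditions and on a direct localization argument for the support conditions.

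For the first claim, I would note that $H^i_\fa(R)$ is always $\fa$-torsion, so $\Supp H^i_\fa(R) \subseteq V(\fa)$ holds automatically. Assuming $I$-cofiniteness of $H^i_\fa(R)$ gives that $\Ext^j_R(R/I, H^i_\fa(R))$ is finitely generated for every $j \geq 0$; then, since $\fa$ is linked by $I$ and $I$ is a non-prime ideal generated by an $R$-regular sequence, the implication $(\mathrm{i}) \Rightarrow (\mathrm{iii})$ of Theorem \ref{t3}, applied with $N = H^i_\fa(R)$, yields that $\Ext^j_R(R/\fa, H^i_\fa(R))$ is finitely generated for all $j$. Hence $H^i_\fa(R)$ is $\fa$-cofinite.

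For the ``in particular'' statement, the key new ingredient, where the hypothesis $i > \grad I$ is used essentially, is the following support claim: \emph{if $\fp \in \Spec R$ with $\fb \not\subseteq \fp$, then $(H^i_\fa(R))_\fp = 0$.} To prove it, I would pick $b \in \fb \setminus \fp$; since $\fa = I :_R \fb$ forces $\fa \fb \subseteq I$, the image of any $a \in \fa$ in $R_\fp$ equals $b^{-1}(ab) \in IR_\fp$, so $\fa R_\fp \subseteq IR_\fp$, and the reverse inclusion is automatic. Thus
$$(H^i_\fa(R))_\fp \cong H^i_{\fa R_\fp}(R_\fp) \cong H^i_{IR_\fp}(R_\fp) \cong (H^i_I(R))_\fp.$$
Since $I$ is generated by an $R$-regular sequence of length $\grad I$, the associated \v{C}ech complex has length $\grad I$, giving $H^i_I(R) = 0$ for $i > \grad I$ and proving the claim. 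In particular, this shows $\Supp H^i_\fa(R) \subseteq V(\fb)$, which is the nontrivial half of the support condition for $\fb$-cofiniteness.

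With the claim in hand, the remainder is bookkeeping with Theorem \ref{t3}. For the forward implication, $\fa$- and $\fb$-cofiniteness together give $\Supp H^i_\fa(R) \subseteq V(\fa) \cap V(\fb) \subseteq V(I)$, and the $\Ext$-finiteness transfers via $(\mathrm{iv}) \Rightarrow (\mathrm{i})$ of Theorem \ref{t3} applied to $N = H^i_\fa(R)$. For the reverse implication, $\fa$-cofiniteness follows from the first claim, while $\fb$-cofiniteness combines the support claim above with the $\Ext$-finiteness provided by $(\mathrm{i}) \Rightarrow (\mathrm{iii})$ of Theorem \ref{t3} applied to the linked ideal $\fb$. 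The main obstacle is genuinely the support claim: without the hypothesis $i > \grad I$ the containment $\Supp H^i_\fa(R) \subseteq V(\fb)$ can fail (since $V(I) = V(\fa)\cup V(\fb)$ is strictly larger than $V(\fa+\fb)$ in general), and it is precisely the vanishing of $H^i_I(R)$ above the grade that reconciles $I$-cofiniteness with simultaneous $\fa$- and $\fb$-cofiniteness.
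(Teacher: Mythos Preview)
Your proposal is correct and follows the same two-step skeleton as the paper: verify the support condition $\Supp H^i_\fa(R)\subseteq V(\fb)$ when $i>\grad I$, then invoke the equivalences of Theorem~\ref{t3} for the $\Ext$-finiteness in both directions. The paper simply asserts the support claim as ``straight-forward'' (phrasing it as $H^i_\fa(R)$ being $\fb$-torsion), whereas you supply an explicit localization argument via $\fa R_\fp=IR_\fp$ for $\fp\notin V(\fb)$ together with the vanishing of $H^i_I(R)$ above the grade; this is a perfectly good way to justify that step.
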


\begin{proof}

     If $i> \grad I$ then it is straight-forward to see that $H^i_{\fa}(R)$ is $\fb$-torsion and $\Supp H^i_{\fa}(R) \subseteq V(\fb)$. Now, the result follows from the above theorem.

   \end{proof}
\section{ attached prime ideals }


Let $(R,\fm)$ be a local ring. For all $i \in \mathbb{N}$, the family $\{ H^i_{\fm}(\frac{M}{\fa^nM})\}_{n \in\mathbb{N}}$ forms an inverse system. The inverse limit $F^i_{\fa} (M): = \underleftarrow{\lim}_n H^i_{\fm}(\frac{M}{\fa^nM})$ is called the $i$-th formal local cohomology module of $M$ with respect to $\fa$. Formal local cohomology were used by Peskine and Szepiro in \cite{PS1} in order to solve a conjecture of Hartshorne.

Artinianness of local cohomology and formal cohomology modules is one of the main problems in this subject, see for example \cite{BR}, \cite{DY1} and \cite{Sc2}. In this section we consider this problem.

The following modification of \cite[Theorem A]{DY1} will be used several times in the paper. We bring it here for the reader's convenience.
\begin{lem}\label{l2}
Let $R$ be a complete local ring. Then $$\Att H^{\dim M}_{\fa} (M) = \{\fp \in \Assh M | \sqrt{\fa+\fp} = \fm \}.$$
\end{lem}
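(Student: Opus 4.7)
The plan is to imitate the proof of Dibaei--Yassemi's Theorem~A in \cite{DY1}, adapting it from the ring $R$ itself to the finitely generated module $M$. The three classical inputs are the Lichtenbaum--Hartshorne Vanishing Theorem (applicable because $R$ is complete local), the Macdonald--Sharp formula $\Att_R H^{\dim N}_\fm(N) = \{\fp\in\Supp N : \dim R/\fp = \dim N\}$, and the fact that $H^{\dim M}_\fa(M)$ is an Artinian $R$-module (so its set of attached primes is well defined). Set $d := \dim M$.

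For the inclusion ``$\subseteq$'' I would fix a prime filtration
\[
0 = M_0 \subset M_1 \subset \cdots \subset M_n = M, \qquad M_i/M_{i-1} \cong R/\fp_i,
\]
and apply local cohomology to each short exact sequence. Since $\dim M_{i-1} \le d$, Grothendieck vanishing gives $H^{d+1}_\fa(M_{i-1}) = 0$, so one obtains a right-exact sequence
\[
H^d_\fa(M_{i-1}) \lo H^d_\fa(M_i) \lo H^d_\fa(R/\fp_i) \lo 0.
\]
Iterating this, together with the elementary principle that $\Att B \subseteq \Att A \cup \Att C$ for any short exact sequence $0 \to A \to B \to C \to 0$ of Artinian modules, yields $\Att H^d_\fa(M) \subseteq \bigcup_i \Att H^d_\fa(R/\fp_i)$. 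Terms with $\dim R/\fp_i < d$ vanish by Grothendieck vanishing; when $\dim R/\fp_i = d$, necessarily $\fp_i \in \Assh M$, and Lichtenbaum--Hartshorne forces $H^d_\fa(R/\fp_i) = 0$ unless $\sqrt{\fa + \fp_i} = \fm$, in which case $H^d_\fa(R/\fp_i) = H^d_\fm(R/\fp_i)$ and Macdonald--Sharp pinpoints $\Att H^d_\fa(R/\fp_i) = \{\fp_i\}$. This proves the inclusion ``$\subseteq$''.

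For the reverse inclusion ``$\supseteq$'', I would take $\fp \in \Assh M$ with $\sqrt{\fa + \fp} = \fm$ and, using $\fp \in \Ass M$, arrange the prime filtration so that $\fp_1 = \fp$, giving an embedding $R/\fp \cong M_1 \hookrightarrow M$. By the analysis above, $\Att H^d_\fa(M_1) = \{\fp\}$. I would then track $\fp$ through the successive exact sequences, noting that only primes $\fp_j$ with $\dim R/\fp_j = d$ and $\sqrt{\fa+\fp_j} = \fm$ contribute new attached primes at later stages, and that $\fp$, being minimal in $\Supp M$, cannot be strictly contained in any such $\fp_j$ with $j>1$. A secondary-representation argument then shows that the component with attached prime $\fp$ persists to $H^d_\fa(M)$.

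The main obstacle is this ``$\supseteq$'' direction: verifying that $\fp$ is not lost when passing between successive stages of the filtration, since attached primes can in principle disappear under subquotients. The cleanest way to handle this is via Matlis duality --- converting the Artinian attached-prime question into an associated-prime question for the Noetherian dual in the complete local setting --- or alternatively by a direct analysis of secondary components, exploiting the minimality of $\fp$ in $\Supp M$ together with the structure of $\Att H^d_\fa(R/\fp_j)$ described above.
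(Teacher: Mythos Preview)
Your approach can be made to work --- the ``$\subseteq$'' direction via a prime filtration is clean, and the Matlis-duality fix you sketch for ``$\supseteq$'' does go through in the complete local setting --- but you are reproving far more than necessary. The paper's proof is essentially two lines: it invokes \cite[Theorem~A]{DY1} \emph{directly}, because that theorem is already stated for a finitely generated module $M$ (not only for $R$), giving
\[
\Att H^{\dim M}_\fa(M)=\{\fp\in\Ass M:\cd(\fa,R/\fp)=\dim M\},
\]
and then simply rewrites the right-hand side. Grothendieck vanishing forces any such $\fp$ to satisfy $\dim R/\fp\ge\dim M$, hence $\fp\in\Assh M$; and for $\fp\in\Assh M$ the condition $\cd(\fa,R/\fp)=\dim R/\fp$ is, by Lichtenbaum--Hartshorne \cite[8.2.3]{BS} (here is where completeness of $R$ enters), equivalent to $\sqrt{\fa+\fp}=\fm$.

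So your filtration argument is in effect a reproof of Theorem~A itself, while the paper treats that theorem as a black box and reads off the lemma immediately. Your route has the merit of being self-contained, but the gap you flag in ``$\supseteq$'' --- persistence of the attached prime through the filtration --- is precisely the delicate point Dibaei--Yassemi already dealt with, so there is little gained by redoing it here.
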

\begin{proof}
In view of \cite[Theorem A]{DY1}, let $\fp \in \Att H^{\dim M}_{\fa} (M) = \{\fp \in \Ass M | \cd (\fa, \frac{R}{\fp})=\dim M \}.$ Then, by the Grothendieck's vanishing theorem \cite[6.1.2]{BS}, $\fp\in \Assh M.$ Now, the result follows from \cite[8.2.3]{BS}.

\end{proof}

In the following lemma, which will be used in the next theorem, we study the associated and attached prime ideals of local cohomology and formal local cohomology modules at the ''lowest'' and ''highest'' level.

\begin{lem}\label{l08}
Let $(R,\fm)$ be a complete local ring and $M$ be a finitely generated R-module of dimension $n$ . Then the following statements hold.
 \begin{itemize}
   \item [(i)] $ \Att H^ n _{\fa\cap \fb}(M) = \Att H^ n _{\fa}(M) \cap \Att H^ n _{\fb}(M) $.
   \item [(ii)] $ \Att F^ n _{\fa\cap \fb}(M) = \Att F^ n _{\fa}(M) \cup \Att F^ n _{\fb}(M) $.
   \item [(iii)] $ \Att F^ n _{\fa+ \fb}(M) = \Att F^ n _{\fa}(M) \cap \Att F^ n _{\fb}(M) $.
   \item [(iv)]  $\Ass F^0_{\fa\cap \fb} (M) = \Ass F^ 0 _{\fa}(M) \cap \Ass F^ 0 _{\fb}(M)$.

Moreover, if $\fa\fb M = 0$ then,

   \item [(v)] $ \Att H^ n _{\fa+ \fb}(M) = \Att H^ n _{\fa}(M) \cup \Att H^ n _{\fb}(M) $.
   \item [(vi)]  $\Ass F^0_{\fa+ \fb} (M) = \Ass F^ 0 _{\fa}(M) \cup \Ass F^ 0 _{\fb}(M)$.
   \end{itemize}
\end{lem}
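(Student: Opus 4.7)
The plan is to reduce each of the six assertions to a set-theoretic statement about primes. For this I would rely on Lemma~\ref{l2} for parts (i) and (v), and on analogous descriptions of $\Att F^n_\fa(M)$ and $\Ass F^0_\fa(M)$ (from Schenzel's work on formal local cohomology) for the formal-cohomology parts; typically $\Att F^n_\fa(M) = \{\fp \in \Assh M : \fa \subseteq \fp\}$, and $\Ass F^0_\fa(M)$ admits a parallel description in terms of $\Ass M$ and the containment of $\fa$ in a prime.

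For (i), Lemma~\ref{l2} reduces the claim to showing, for each $\fp \in \Assh M$, that $\sqrt{(\fa\cap\fb)+\fp}=\fm$ iff $\sqrt{\fa+\fp}=\fm$ and $\sqrt{\fb+\fp}=\fm$. Using $V((\fa\cap\fb)+\fp) = V(\fa+\fp)\cup V(\fb+\fp)$ and the observation that each piece is a nonempty closed subset of $\Spec R$ containing $\fm$ (since $\fa,\fb,\fp\subseteq\fm$), the union equals $\{\fm\}$ if and only if each piece does. Parts (ii) and (iii) follow directly from the Schenzel-type description: for a prime $\fp$, $\fa\cap\fb\subseteq\fp$ iff $\fa\subseteq\fp$ or $\fb\subseteq\fp$, and $\fa+\fb\subseteq\fp$ iff both hold. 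Part (iv) is the analogous elementary manipulation using the corresponding description of $\Ass F^0_\fa(M)$.

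For (v), the hypothesis $\fa\fb M = 0$ does the crucial work: it forces $\fa\fb \subseteq \Ann M \subseteq \fp$ for every $\fp \in \Ass M$, so by primeness $\fa\subseteq\fp$ or $\fb\subseteq\fp$. If $\fa\subseteq\fp$ then $(\fa+\fb)+\fp=\fb+\fp$ and Lemma~\ref{l2} shows that $\fp$ attaches $H^n_{\fa+\fb}(M)$ iff it attaches $H^n_\fb(M)$; the case $\fb\subseteq\fp$ is symmetric, and any $\fp$ containing both ideals forces $\fp=\fm$, which appears on both sides simultaneously. Part (vi) is the analog for $F^0$ via the same containment dichotomy.

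The main obstacle I anticipate is sourcing or re-deriving the precise characterizations of $\Att F^n_\fa(M)$ and $\Ass F^0_\fa(M)$ that make (ii)--(iv) and (vi) collapse to prime-containment identities. Once these are in hand, the six proofs follow essentially one pattern (distribute $V(-)$ over intersection/sum, then use primeness); the delicate point is the edge case $\dim M = 0$ in (v), where the scenario $\fa,\fb\subseteq\fp$ is no longer excluded and one must check it contributes identically to both sides.
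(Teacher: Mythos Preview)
Your overall strategy matches the paper's: reduce each assertion to a set-theoretic identity about primes via the explicit descriptions of the relevant attached/associated prime sets. Parts (i) and (v) are handled exactly as you describe (and the paper's argument for (v) is precisely your dichotomy $\fa\subseteq\fp$ or $\fb\subseteq\fp$ coming from $\fa\fb M=0$). For (ii) and (iii) the paper simply cites \cite[3.1]{BR}, which is consistent with the containment-type description $\Att F^n_\fa(M)=\{\fp\in\Assh M:\fa\subseteq\fp\}$ you anticipate.

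There is, however, a slip in your plan for (iv) and (vi). The characterization of $\Ass F^0_\fa(M)$ that the paper invokes (Schenzel \cite[4.1]{Sc2}) is \emph{not} of containment type; it is
\[
\Ass F^0_\fa(M)=\{\fp\in\Ass M:\sqrt{\fa+\fp}=\fm\},
\]
i.e., of exactly the same shape as Lemma~\ref{l2} but with $\Assh M$ replaced by $\Ass M$. This matters: with a containment-type description, the identity $\fa\cap\fb\subseteq\fp\Leftrightarrow(\fa\subseteq\fp\text{ or }\fb\subseteq\fp)$ would force a \emph{union} on the right-hand side of (iv), contradicting the stated intersection; (vi) would likewise come out with the wrong parity. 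Once you substitute the correct radical-type description, (iv) becomes literally the same computation as (i), and (vi) becomes the same computation as (v) --- and that is exactly how the paper proves them. So your framework is right, but the specific guess for $\Ass F^0_\fa(M)$ needs to be corrected before (iv) and (vi) go through.
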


\begin{proof}
\begin{itemize}
   \item [(i)] Let $\fp\in \Assh M$. Then, it is straight-forward to see that $\sqrt{(\fa \cap\fb) + \fp} = \fm$ if and only if $\sqrt{\fa +\fp} = \fm$ and $\sqrt{\fb +\fp} = \fm$. Now, the result follows from \ref{l2}.

   \item [(ii)] and (iii) are immediate by \cite[3.1]{BR}.
   \item [(iv)] As we have seen in the proof of the part (i), for any $\fp\in \Ass M$, $\sqrt{(\fa \cap\fb) + \fp} = \fm$ if and only if $\sqrt{\fa +\fp} = \fm$ and $\sqrt{\fb +\fp} = \fm$. Now, the result follows from \cite[4.1]{Sc2}.

   \item [(v)] Let $\fp \in \Att H^ n _{\fa}(M)$. Then, by the above lemma, $\fp\in \Assh M $ and $\sqrt{\fa +\fp} = \fm$. Hence, $\sqrt{\fa+\fb +\fp} = \fm$ and so, $ \fp \in \Att H^ n _{\fa+\fb}(M)$.
       Conversely, let $\fp\in \Att H^ n _{\fa+ \fb}(M)$. Then $\fp\in \Assh M $ and $\sqrt{\fa +\fb +\fp} = \fm$. Hence, by the assumption, $\fp \supseteq \fa$ or $\fp \supseteq \fb$ and so, $\sqrt{\fb +\fp} = \fm$ or $\sqrt{\fa +\fp} = \fm$. This implies that $\fp \in \Att H^ n _{\fa}(M) \cup \Att H^ n _{\fb}(M).$
   \item [(vi)]  The result follows from \cite[4.1]{Sc2}.
   \end{itemize}

\end{proof}

 The following theorem gives us a necessary and sufficient condition for $M$ to be Cohen-Macaulay in terms the existence of some special linked ideals over it.

\begin{thm} \label {t6}
Let $(R , \fm)$ be local and $d:= \dim M$. Then the following statements are equivalent.

 \begin{itemize}
   \item [( i )] $M$ is Cohen-Macaulay.
   \item [( ii )] There exist ideals $\fa, \fb$ and $I$ such that $\fa\sim_{(I;M)}\fb$ and $\Att H^d_{\fa}(M) \bigcap \Att H^d_{\fb}(M) \neq \varnothing$.
   \item [( iii )] There exist ideals $\fa, \fb$ and $I$ such that $\fa\sim_{(I;M)}\fb$, $\Ass F^0_{\fa}(\frac{M}{IM}) \bigcap \Ass F^0_{\fb}(\frac{M}{IM}) \neq \varnothing$ and $\mid \Ass \frac{M}{IM}\mid = 1$.
   \item [( iv )]There exist ideals $\fa, \fb$ and $I$ such that $\fa\sim_{(I;M)}\fb$, $\Ass \frac{M}{IM} = \Min\Ass \frac{M}{IM}$ and $\dim \frac{M}{\fa M} = 0$.
   \item [( v )] There exist ideals $\fa, \fb$ and $I$ such that $\fa\sim_{(I;M)}\fb$, $\Ass \frac{M}{IM} = \Min\Ass \frac{M}{IM}$ and $\Ass F^0_{\fa} (M) = \Ass M.$
   \end{itemize}
   In particular, $R$ is Cohen-Macaulay if and only if there exists a maximal $R$-sequence $\fx$ such that $\Ass \frac{R}{(\fx)}= \Min \frac{R}{(\fx)} $.
\end{thm}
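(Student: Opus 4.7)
My plan is to establish the equivalences by showing (i) implies each of (ii)--(v), and conversely each of (ii)--(v) implies (i). The central tools will be Lemma \ref{l2} and Lemma \ref{l08}, together with the definition of linkage and basic properties from \cite{JS}, \cite{JS1}, \cite{JS2}. Because the Cohen-Macaulay property of $M$ is detectable on the $\fm$-adic completion, I would work over $\widehat{R}$ whenever it simplifies the use of Lemma \ref{l2} and the formal local cohomology arguments.

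\emph{Forward direction.} Assuming $M$ is Cohen-Macaulay with $d=\dim M$, I pick a maximal $M$-regular sequence $x_1,\dots,x_d$ and set $I=(x_1,\dots,x_d)$. Then $M/IM$ is Cohen-Macaulay of dimension $0$, so $\Ass M/IM=\{\fm\}=\Min\Ass M/IM$. Taking $\fa=\fb=\fm$ (one checks that $\fm$ is $M$-selflinked by such an $I$ using the characterizations in \cite{JS}) immediately gives $\dim M/\fa M=0$, yielding (iv); Lemma \ref{l2} shows $\Att H^d_{\fm}(\widehat M)=\Assh\widehat M$, whose intersection with itself is nonempty, yielding (ii); the identity $F^0_{\fm}(N)\cong\widehat{N}$ for finitely generated $N$ furnishes (iii) and (v).

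\emph{Reverse direction.} For (ii) $\Rightarrow$ (i), pick $\fp\in\Att H^d_{\fa}(M)\cap\Att H^d_{\fb}(M)$. After completion, Lemma \ref{l2} yields $\fp\in\Assh\widehat{M}$ with $\sqrt{\fa+\fp}=\sqrt{\fb+\fp}=\fm$. The linkage hypothesis provides $IM\subseteq\fa M\cap\fb M$, which combined with the prime condition forces $\sqrt{I+\fp}=\fm$. Hence $I$ contains a system of parameters on $M/\fp M$, and since $I$ is generated by an $M$-regular sequence of length $\leq d$, we must have $\grad_M I=d=\dim M$, so $M$ is Cohen-Macaulay. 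The implications (iii)--(v) $\Rightarrow$ (i) run along similar lines: the unmixedness hypothesis $\Ass M/IM=\Min\Ass M/IM$ combined with the auxiliary condition collapses $\Ass M/IM$ to $\{\fm\}$, which forces $\dim M/IM=0$ and hence $\grad_M I=d$. Concretely, (iii) uses Lemma \ref{l08}(vi) and $|\Ass M/IM|=1$; (iv) uses that $\dim M/\fa M=0$ makes $\fa$ essentially $\fm$-primary on $M$; and (v) uses that $\Ass F^0_{\fa}(M)=\Ass M$ together with the unmixedness forces $\fa$-cofinality with $\fm$ on $\Assh M$.

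\emph{The ``in particular'' statement} is then a direct corollary (taking $M=R$, $\fa=\fb=\fm$, $I=(\fx)$ in (iv)), but admits a short direct proof as well: if $R$ is Cohen-Macaulay, any maximal $R$-sequence gives $R/(\fx)$ Cohen-Macaulay of dimension $0$, so $\Ass R/(\fx)=\{\fm\}=\Min R/(\fx)$. Conversely, maximality of $\fx$ puts $\fm\in\Ass R/(\fx)=\Min R/(\fx)$, so $\h\fm\leq\#\fx=\depth R\leq\dim R=\h\fm$, giving Cohen-Macaulayness. The main obstacle I anticipate is the reverse implication (ii) $\Rightarrow$ (i): extracting a system-of-parameters assertion for $M$ from the purely set-theoretic overlap of two attached-prime sets requires carefully pairing Lemma \ref{l2} with the geometric content of $\fa\sim_{(I;M)}\fb$, and may need the extra hypothesis $\fa M\cap\fb M=IM$ invoked from \cite{JS} to conclude.
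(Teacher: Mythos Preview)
Your overall architecture matches the paper's: use self-linkage of $\fm$ by a maximal $M$-sequence for the forward implications, and squeeze $\grad_M I=d$ out of each of (ii)--(v) for the converse. The forward direction and the ``in particular'' clause are fine and essentially identical to the paper's treatment.

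The genuine gap is in your (ii)$\Rightarrow$(i). From $\sqrt{\fa+\fp}=\sqrt{\fb+\fp}=\fm$ you want $\sqrt{I+\fp}=\fm$, and you justify this by ``$IM\subseteq \fa M\cap\fb M$''. That containment points the wrong way: knowing $I\subseteq\fa\cap\fb$ cannot force a prime containing $I+\fp$ to contain $\fa$ or $\fb$. What is actually needed is the \emph{reverse} inclusion coming from the linkage equations: $\fb M=IM:_M\fa$ gives $\fa\fb M\subseteq IM$, hence $(\fa\cap\fb)^2 M\subseteq IM$. This is precisely the content the paper imports from \cite[3.1]{JS}: it makes $\fa\cap\fb$ and $I$ agree up to radical on $\Supp M$, so that Lemma~\ref{l08}(i) yields $H^d_{\fa\cap\fb}(M)\neq 0$ and therefore $H^d_I(M)\neq 0$; since $I$ is generated by an $M$-sequence of length $g\le d$, Grothendieck vanishing forces $g=d$. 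Your closing caveat (``may need $\fa M\cap\fb M=IM$'') shows you sensed the problem; in fact no geometric-linkage hypothesis is required---$(\fa\cap\fb)^2 M\subseteq IM$ is automatic from the linkage definition and suffices.

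Two smaller points. For (iii)$\Rightarrow$(i) you invoke Lemma~\ref{l08}(vi), but that part concerns $\fa+\fb$ and requires $\fa\fb M=0$; the relevant item is \ref{l08}(iv), which handles $\fa\cap\fb$ unconditionally. Once $\Ass F^0_{\fa\cap\fb}(M/IM)\neq\varnothing$, the same radical identification $(\fa\cap\fb)^2M\subseteq IM$ gives $F^0_{\fa\cap\fb}(M/IM)\cong\Gamma_{\fm}(M/IM)$, so $\fm\in\Ass M/IM$; combined with $|\Ass M/IM|=1$ this yields $\dim M/IM=0$. For (v)$\Leftrightarrow$(iv) the paper simply observes, via \cite[4.1]{Sc2}, that $\Ass F^0_{\fa}(M)=\Ass M$ is equivalent to $\dim M/\fa M=0$; this is cleaner than the route you sketch.
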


\begin{proof}
 First of all, note that  if $M$ is Cohen-Macaulay then every system of parameters of $M$ is generated by an $M$-regular sequence. On the other hand, by \cite[2.2]{JS}, every $M$-regular sequence is $M$-selflinked.

$ (i)\rightarrow(ii) $ It is clear.

$ (ii)\rightarrow(i) $  By \ref{l08}(i), $\Att H^ d _{\fb\cap\fa}(M) \neq\varnothing$. Hence, by \cite[3.1]{JS}, $ht _M I = d$ and $M$ is Cohen-Macaulay.

$ (i)\rightarrow(iii) $ It is clear by \cite[4.1]{Sc2}.

$ (iii)\rightarrow(i) $ By \ref{l08}(iv), $\Ass F^ 0 _{\fb\cap\fa}(\frac{M}{IM}) \neq\varnothing$. Hence, by \cite[3.1]{JS}, $\Ass \Gamma_{\fm}(\frac{M}{IM}) \neq \varnothing$ and so $\fm \in \Ass (\frac{M}{IM})$. This implies that $\dim \frac{M}{IM}= 0.$ Therefore, $M$ is Cohen-Macaulay.

$ (iv)\rightarrow(i) $  The result follows from the concept of linkedness and \cite[2.9]{JS} .

$ (v)\leftrightarrow(iv) $ One has $\dim \frac{M}{\fa M} = 0$ if and inly if $\Ass F^0_{\fa} (M) = \Ass M.$ Now the result follows from \cite[4.1]{Sc2}.

For the end, we may assume that $\fm \neq (\fx)$. As $\fm \subseteq (\fx):_R (\fx):_R \fm$, $\fm$ is linked by $(\fx)$. Hence $\dim \frac{R}{(\fx)} = 0$, by \cite[2.9]{JS}, and $R$ is Cohen-Macaulay.

\end{proof}

The following lemma, which shows that the equidimensional property of $M$ passes through linkage to $\frac{M}{\fa M}$ and $\frac{M}{\fb M},$ will be used in the next proposition.

\begin{lem}\label{r1}
Assume that $M$ is equidimensional and $\fa$ and $\fb$ are geometrically linked over $M$ by zero ideal. Then $\frac{M}{\fa M}$ and $\frac{M}{\fb M}$ are equidimensional of dimension $\dim M.$
\end{lem}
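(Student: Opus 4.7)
The plan is to reduce the claim to showing $\Min(M/\fa M) \subseteq \Min M$; once this is established, equidimensionality of $M$ immediately forces every minimal prime of $M/\fa M$ to have coheight $\dim M$, so $M/\fa M$ is equidimensional of dimension $\dim M$. The linkage relation $\fa \sim_{(0;M)} \fb$ is symmetric in $\fa$ and $\fb$, so the identical argument will handle $M/\fb M$.

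To establish the reduction, I would fix $\fp \in \Min(M/\fa M)$ (this set is nonempty since $\fa M \neq M$) and pick a minimal prime $\fq \in \Min M$ with $\fq \subseteq \fp$. The crux is to show $\fa \subseteq \fq$, since then $\fq \in V(\fa) \cap \Supp M = \Supp(M/\fa M)$ forces $\fq = \fp$ by minimality of $\fp$. Suppose for contradiction that $\fa \not\subseteq \fq$. The short exact sequence $0 \to \fa M \to M \to M/\fa M \to 0$ yields $\Ass M \subseteq \Ass(\fa M) \cup \Ass(M/\fa M)$; since $\fq \in \Ass M$ but $\fq \notin V(\fa) \supseteq \Ass(M/\fa M)$, we must have $\fq \in \Ass(\fa M)$. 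But $\fa M = 0 :_M \fb$ is annihilated by $\fb$, so $\fb \subseteq \Ann_R(\fa M) \subseteq \fq \subseteq \fp$. Now exploit $\fp \in \Ass(M/\fa M)$: choose $m \in M \setminus \fa M$ with $\fp = (\fa M :_R m)$. Then $\fb m \subseteq \fp m \subseteq \fa M$ and simultaneously $\fb m \subseteq \fb M$, so by the geometric linkage hypothesis $\fb m \subseteq \fa M \cap \fb M = 0$. This gives $m \in 0 :_M \fb = \fa M$, contradicting $m \notin \fa M$.

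Hence $\fa \subseteq \fq$, so $\fp = \fq \in \Min M$ and $\dim R/\fp = \dim M$ by equidimensionality of $M$. As $\fp \in \Min(M/\fa M)$ was arbitrary, $M/\fa M$ is equidimensional of dimension $\dim M$, and the symmetric argument handles $M/\fb M$. The main obstacle is precisely the case $\fa \not\subseteq \fq$ treated above: one needs to leverage both halves of the linkage datum simultaneously — the duality $\fa M = 0 :_M \fb$ (which produces $\fb \subseteq \fq$) together with the geometric vanishing $\fa M \cap \fb M = 0$ (which converts the witness $m$ for $\fp$ into an element of $\fa M$). Neither piece alone is sufficient, so the proof hinges on playing these two identities off against each other through the associated-prime representation of $\fp$.
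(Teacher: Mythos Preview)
Your proof is correct and follows the same overall strategy as the paper: both reduce to showing $\Min(M/\fa M)\subseteq\Min M$, take $\fp\in\Min(M/\fa M)$ and $\fq\in\Min M$ with $\fq\subseteq\fp$, assume $\fa\nsubseteq\fq$, deduce $\fb\subseteq\fq\subseteq\fp$, and then derive a contradiction from $\fp\in\Ass(M/\fa M)\cap V(\fb)$.

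The execution differs in a useful way. The paper obtains $\fb\subseteq\fq$ and the final contradiction by quoting structural results from \cite{JS} (specifically \cite[2.8]{JS}), ultimately reducing to the fact that $\grad_M(\fa+\fb)>0$ forces $\Ass M\cap V(\fa+\fb)=\emptyset$. You instead unpack these citations into elementary arguments: the inclusion $\fb\subseteq\fq$ comes from $\fq\in\Ass(\fa M)$ together with $\fa M=0:_M\fb$, and the contradiction comes from a direct element computation using the geometric intersection $\fa M\cap\fb M=0$ to force the witness $m$ for $\fp$ back into $\fa M$. Your route is entirely self-contained and makes the role of the geometric hypothesis more transparent, whereas the paper's version is shorter but relies on the reader having the cited lemmas at hand.
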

\begin{proof}
Let $\fp \in \Min \Ass \frac{M}{\fa M}- \Min \Ass M$. Then, there exists $\fq \in \Min \Ass M$ such that $\fq \subset \fp$. Hence $\fa \nsubseteq \fq$ and $\fb \subseteq \fq$. This, in view of \cite[2.8(iv)]{JS}, implies that $\fp \in \Ass \frac{M}{\fa M} \cap V(\fb)= \Ass M \cap V(\fa +\fb)= \emptyset$, by \cite[2.8]{JS} and the fact that $\grad_M\fa +\fb>0.$ Therefore, $\Min \Ass \frac{M}{\fa M}\subseteq \Min \Ass M$ and the result follows.

\end{proof}
In the following proposition we study vanishing and attached prime ideals of $H^{\dim M}_{\fa} (M),$ where $\fa$ is linked over $M$.
\begin{prop}\label{l15}
Let $(R,\fm)$ be local and complete, $\fa\sim _{(0;M)}\fb$ and $n:= \dim M>0$. Then the following assertions hold.
\begin{itemize}
 \item [(i)] $\Att H^n_{\fa} (M) \subseteq \Assh \frac{M}{\fb M}$.

 \item [(ii)] If $\dim \frac{M}{\fa M} \neq\dim \frac{M}{\fb M}$ then either $H^n_{\fa} (M)=0$ or $H^n_{\fb} (M)=0$. In particular, if $\Ass F^0_{\fa} (M)= \Assh M$ then $H^n_{\fa} (M)\cong H^n_{\fm} (M)$ and $H^n_{\fb} (M)=0$.
 \item [(iii)] Assume that $M$ is equidimensional and $\fa$ and $\fb$ are geometrically linked over $M.$ Then $\Att H^n_{\fa} (M)= \Assh \frac{M}{\fb M}$ if and only if $\Att H^n_{\fb} (M)= \Assh \frac{M}{\fa M}$.
     \end{itemize}
\end{prop}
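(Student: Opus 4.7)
The three parts share a common key ingredient. First, $\fa \sim_{(0;M)} \fb$ gives $\fb M = 0:_M \fa$, so $\fa\fb M = 0$, hence $\fa\fb \subseteq \Ann_R M$. Second, Lemma \ref{l2} identifies $\Att H^n_\fa(M) = \{\fp \in \Assh M : \sqrt{\fa+\fp} = \fm\}$. These two facts are the engine of the argument.

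For part (i), I take $\fp \in \Att H^n_\fa(M)$, which by Lemma \ref{l2} gives $\fp \in \Assh M$, $\sqrt{\fa+\fp} = \fm$, and $\fp \neq \fm$ (since $n > 0$). Since $\fa\fb \subseteq \Ann_R M \subseteq \fp$, primeness forces $\fa \subseteq \fp$ or $\fb \subseteq \fp$; the first option would yield $\sqrt{\fa+\fp} = \fp \neq \fm$, so $\fb \subseteq \fp$. To move from $\fp \in \Ass M$ to $\fp \in \Ass(M/\fb M)$, I read off associated primes from $0 \to \fb M \to M \to M/\fb M \to 0$: if instead $\fp \in \Ass(\fb M)$, then $\fp \supseteq \Ann_R(\fb M) \supseteq \fa$, contradicting the above. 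The equalities $\dim R/\fp = n = \dim M \geq \dim(M/\fb M)$, combined with $\fp \in \Supp(M/\fb M)$, then place $\fp$ in $\Assh(M/\fb M)$.

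Part (ii)'s first assertion follows by combining (i) with Lemma \ref{l2}: attached primes of $H^n_\fa(M)$ have coheight $n$ and sit inside $\Assh(M/\fb M)$, so $H^n_\fa(M) \neq 0$ forces $\dim(M/\fb M) = n$, and symmetrically for $\fb$. If the two dimensions differ, one of them drops below $n$ and the matching top cohomology vanishes. For the ``in particular'' statement, \cite[4.1]{Sc2} translates $\Ass F^0_\fa(M) = \Assh M$ into ``$\sqrt{\fa+\fp} = \fm$ for every $\fp \in \Assh M$''; any $\fq \in \Att H^n_\fb(M)$ would then satisfy $\sqrt{\fa+\fq} = \fm$ (by hypothesis) and $\sqrt{\fb+\fq} = \fm$ (from Lemma \ref{l2}), contradicting the dichotomy $\fa \subseteq \fq$ or $\fb \subseteq \fq$ forced by $\fa\fb \subseteq \fq$. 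For $H^n_\fa(M) \cong H^n_\fm(M)$, I reduce modulo a submodule $N \subseteq M$ with $\dim N < n$ and $\Ass(M/N) = \Assh M$, so that $H^n_\fa(M) \cong H^n_\fa(M/N)$ and $H^n_\fm(M) \cong H^n_\fm(M/N)$; then $\Ann_R(M/N) = \bigcap_{\fp \in \Assh M}\fp$, and the hypothesis gives $\sqrt{\fa+\Ann_R(M/N)} = \bigcap_{\fp \in \Assh M} \sqrt{\fa+\fp} = \fm$, so modulo $\Ann_R(M/N)$ the ideals $\fa$ and $\fm$ share the same radical, yielding $H^n_\fa(M/N) \cong H^n_\fm(M/N)$.

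Part (iii) is essentially a symmetry observation. The entire hypothesis package ($M$ equidimensional, $\fa \sim_{(0;M)} \fb$, and $\fa M \cap \fb M = 0$) is invariant under the swap $\fa \leftrightarrow \fb$, and this swap interchanges the two sides of the biconditional: ``$\Att H^n_\fa(M) = \Assh(M/\fb M)$'' and ``$\Att H^n_\fb(M) = \Assh(M/\fa M)$'' are literal relabellings of one another, so they are equivalent. Part (i) already supplies the $\subseteq$ inclusions on both sides, and what the symmetry really transports is the reverse containment. The main technical bite of the whole proposition, I expect, lies in part (ii), specifically in establishing $H^n_\fa(M) \cong H^n_\fm(M)$; the reduction to $M/N$ together with the ``same radical modulo annihilator'' trick is what makes that step go through cleanly.
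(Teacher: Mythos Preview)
Parts (i) and (ii) are sound. In (i) you recover by hand what the paper quotes from \cite[2.8(i)]{JS}, and in (ii) you replace the paper's black-box citation of \cite[1.6]{DY} by a direct reduction to a quotient $M/N$ with $\Ass(M/N)=\Assh M$; that works (note only that $\Ann_R(M/N)$ need not literally equal $\bigcap_{\fp\in\Assh M}\fp$, but its radical does, which is all you use).

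Part (iii), however, has a genuine gap. You write that the two statements ``$\Att H^n_{\fa}(M)=\Assh(M/\fb M)$'' and ``$\Att H^n_{\fb}(M)=\Assh(M/\fa M)$'' are literal relabellings under $\fa\leftrightarrow\fb$ and therefore equivalent. That inference is invalid: the symmetry of the hypotheses tells you only that the \emph{implication} $P\Rightarrow Q$ transforms into $Q\Rightarrow P$ under the swap, so proving one direction yields the other. It does \emph{not} say that, for the fixed pair $(\fa,\fb)$ at hand, $P$ and $Q$ are logically equivalent (compare: ``$\fa$ is prime'' and ``$\fb$ is prime'' are relabellings of each other but certainly not equivalent). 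You therefore still owe a proof of one direction, and you never supply it.

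The paper does this work: assuming $\Att H^n_{\fa}(M)=\Assh(M/\fb M)$, it uses Lemma \ref{r1} (equidimensionality of $M/\fa M$ and $M/\fb M$, with $\Min\Ass(M/\fa M)\subseteq\Assh M$) together with Lemma \ref{l2} to show that for every $\fq\in\Assh(M/\fa M)$ one has $\sqrt{\fq+\fb}=\fm$, via $\sqrt{\fq+\bigcap_{\fp\in\Min\Ass(M/\fb M)}\fp}=\bigcap_\fp\sqrt{\fq+\fp}=\fm$. This places each such $\fq$ in $\Att H^n_{\fb}(M)$, giving the missing reverse inclusion. Your sketch invokes (i) for the easy inclusion and symmetry for the hard one, but the hard one is precisely where the content of (iii) lies.
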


\begin{proof}

\begin{itemize}
 \item [(i)] Let $\fp \in \Att H^n_{\fa} (M)$. Then, by \ref{l2}, $\fp \in \Assh M$ and $\sqrt{\fa+\fp}=\fm$. Therefore, $\fa \nsubseteq\fp$ and, by \cite[2.8(i)]{JS}, $\fp \in \Assh \frac{M}{\fb M}$.

     \item [(ii)] Assume that $\dim \frac{M}{\fb M} <\dim \frac{M}{\fa M}$. Then, by \cite[2.5(iii)]{JS}, $\dim M =\dim \frac{M}{\fa M}$ and also, by (i), $\Att H^n_{\fa} (M) \subseteq \Assh \frac{M}{\fb M} \cap \Assh M = \emptyset$ which implies that $H^n_{\fa} (M)=0$.

     Now, let $\fp \in \Assh M$. Then, by the assumption and \cite[4.1]{Sc2}, $\sqrt{\fa+\fp}=\fm$ and $\fa \nsubseteq \fp$. Therefore, by (i), $\Att H^n_{\fb} (M)\subseteq \Assh M \cap \Assh \frac{M}{\fa M}= \emptyset$ and $H^n_{\fb} (M)=0$. On the other hand, in view of \ref{l2}, $\Att H^n_{\fa}(M) = \Att H^n_{\fm} (M)$ and, using \cite[1.6]{DY}, this implies that $H^n_{\fa}(M) \cong H^n_{\fm} (M)$.
 \item [(iii)] In view of \ref{r1}, $\frac{M}{\fa M}$ and $\frac{M}{\fb M}$ are equidimensional of dimension $\dim M$. Assume that $\Att H^n_{\fa} (M)= \Assh \frac{M}{\fb M}.$ Then, by \ref{l2}, for every $\fp \in \Min\Ass \frac{M}{\fb M}$, $\sqrt{\fa+\fp}=\fm$. Hence, for every $\fq \in \Min\Ass \frac{M}{\fa M}$, $\sqrt{\fq+\fp}=\fm$. This implies that $$\sqrt{\fq+\cap_{\fp \in \Min\Ass \frac{M}{\fb M}}\fp}=\sqrt{\fq+\fb}=\fm.$$ Therefore, by (i), $\Att H^n_{\fb} (M)= \Assh \frac{M}{\fa M}$.
     \end{itemize}

\end{proof}
\bibliographystyle{amsplain}

\end{document}